\documentclass[12pt]{article}   	
\usepackage{geometry}                		
\usepackage{graphicx}				
\usepackage{caption}
\usepackage{subcaption}
\usepackage{amssymb}
\usepackage{amsthm}
\usepackage{amscd}
\usepackage{csquotes}
\usepackage{amsmath}
\usepackage{mathtools}
\usepackage{float}
\usepackage{physics}
\MakeOuterQuote{"}
\newtheorem*{theorem*}{Theorem}

\newtheorem{theorem}{Theorem}[section]
\newtheorem{proposition}[theorem]{Proposition}
\newtheorem{lemma}[theorem]{Lemma}

\newtheorem*{conjecture*}{Conjecture}
\title{Ill-Posedness of the Euler Equations Linearized around Homogeneous Steady States}
\author{Matei P. Coiculescu\thanks{
Department of Mathematics, New York University,
Email: \texttt{mpc9525@nyu.edu}
}}
\begin{document}
\maketitle
\begin{abstract}
Let $L^2_m(\mathbb{R}^2)$ be the space of square-integrable functions on $\mathbb{R}^2$ with $m$-fold rotational symmetry. Let $\overline{\omega}(r,\theta) = r^{-\alpha}f(\theta)$ be a homogeneous steady state of the two-dimensional incompressible Euler equations with $(m\cdot l)$-fold rotational symmetry. If $f(\theta)$ is a constant function we say that $\overline{\omega}$ is a radial power-law vortex. We prove that the incompressible Euler equations in vorticity form, linearized around any homogeneous steady state $\overline{\omega}$ that is not a radial power-law vortex, are ill-posed on $L^2_m(\mathbb{R}^2)$ for any $m\geq 2$, any $l\geq 1$ and $\alpha\in (0,1)$. \end{abstract}
\section{Introduction}
We consider the incompressible Euler equations in the plane:
\begin{equation}
\label{EULER}
\begin{gathered}
\partial_t \omega + v\cdot\nabla \omega=0\\
v = K_{BS}\ast \omega\\
\omega(t=0) = \omega_0.
\end{gathered}
\end{equation}
Here $K_{BS}$ denotes the Biot-Savart kernel in two dimensions. We work with the vorticity form of the equations. One of the many motivations for studying Equation \eqref{EULER} is to understand its initial value problem from unbounded initial vorticity. A standard class of initial data is vorticity in $L^1\cap L^p$ where $1<p\leq \infty$. While existence of global-in-time solutions is known for all $p>1$, uniqueness is only known for bounded vorticity in $L^1\cap L^\infty$ or for vorticity distributions with some Osgood-type condition.

Vishik, in a pair of works \cite{V1}, \cite{V2}, provided the first concrete evidence for non-uniqueness of solutions from unbounded vorticity. In particular he proved that if one admits a force in $L^1_t (L^1\cap L^p)_x$ for $2<p<\infty$, then non-uniqueness may also occur (even from zero initial data). The main idea behind Vishik's argument is to consider the initial value problem in self-similar coordinates, where linear instability of a self-similar profile corresponds directly to non-uniqueness of a solution from singular data. See \cite{DM} for a concise description of this non-uniqueness mechanism for the Euler equations. Due to the difficulty of studying the linearization of the Euler equations in self-similar variables, Vishik instead proves that an instability exists in physical coordinates and proceeds with a singular limit argument to carry the instability through to the self-similar equation. A comprehensive review of Vishik's entire argument is given in the monograph \cite{ABC}.

While ingenious, several issues appear when one attempts the same argument without a force. For instance, one should first find exact forward self-similar solutions to Equation \eqref{EULER}, and, indeed, this is already a difficult question. Any such solution arises from a homogeneous initial data, and, until recently, the only known self-similar solutions to Equation \eqref{EULER} were precisely the stationary homogeneous solutions and small perturbations of the power-law vortex with vorticity $|x|^{-\alpha}$. We refer the reader to \cite{C}, \cite{CC}, and \cite{LS} for references on homogeneous solutions to the Euler equations, and we refer the reader to \cite{H} for the first construction of self-similar solutions from arbitrary $(-\alpha)-$homogeneous vorticity data, when $\alpha\in (1,2)$. We note, however, that the construction of self-similar solutions from less singular arbitrary data is still open. The purpose of this note is to essentially rule out the use of homogeneous steady states for the non-uniqueness program, which further illustrates the difficulties lying in Vishik's approach.

Let $A:D(A)\subset X\to X$ be a closed linear operator. We recall from \cite{EN} that an abstract Cauchy problem of the form
$$
\begin{gathered}
\dot{u}(t) =Au(t) \quad \forall t\geq 0\\
u(0)=x
\end{gathered}
$$
is called \textit{well-posed} if and only if the domain $D(A)$ is dense and for each $x\in D(A)$ there exists a unique solution $u(t)$ that continuously depends on the initial data in the sense that for any sequence $x_n\in D(A)$ with $x_n\to 0$ we have $u(t,x_n)\to 0$ uniformly in compact intervals of time. It is proven in Theorem 6.7 of Chapter II in \cite{EN} that the abstract Cauchy problem above is well-posed if and only if $A$ is a generator of a strongly continuous semigroup. We call the Cauchy problem \textit{ill-posed} if and only if it is not well-posed. We shall always work with Hilbert spaces of functions on $\mathbb{R}^2$ for which there is a natural orthogonal decomposition into angular modes. For example we have
$$L^2(\mathbb{R}^2) = \bigoplus^{\ell^2}_{k\in\mathbb{Z}} U_k,$$
where
$$U_k := \{f(r) e^{ik\theta} :  f\in L^2(\mathbb{R}^+, rdr)\}.$$
We also consider the spaces $L^2_m(\mathbb{R}^2)$ of square-integrable functions on $\mathbb{R}^2$ that are $m$-fold rotationally symmetric. In terms of the notation above, we have the orthogonal decomposition:
$$L^2_m(\mathbb{R}^2) = \bigoplus^{\ell^2}_{k\in\mathbb{Z}} U_{km}.$$
One important reason to work with these subspaces of symmetric functions is that the Biot-Savart law is better behaved than on $L^2(\mathbb{R}^2)$, where the Biot-Savart operator is simply unbounded (see \cite{ABC} and \cite{BC} for details). We also note here that for a homogeneous steady state to have locally finite kinetic energy one requires the homogeneity parameter $\alpha<2$ and for a homogeneous steady state to have decaying vorticity, one requires $\alpha>0$. On the other hand, for the background vorticity profile to be in $L^1_{loc}(\mathbb{R}^2)$, we require $\alpha<1$, as we shall observe in the next section. Henceforth, we choose the parameter $\alpha\in (0,1)$. The main objects of study here are the homogeneous steady states of the two-dimensional Euler equations whose stream functions vanish along some ray from the origin. If the stream-function does not vanish along any such ray, the homogeneous solution is called {\textit{elliptic}}. Due to the classification theorem of Luo and Shvdkoy from \cite{LS} and their addendum in \cite{LS2}, we know that for all $\alpha\in (0,3/2)\cup (3/2,2)$, any homogeneous steady state that is not the power law vortex is not elliptic, i.e. the stream function of the solution has the vanishing property. Moreover, as we shall see, it is possible to obtain asymptotic formulae for the singular behavior of the steady state along these rays. When $\alpha=3/2$, there is a family of exact steady states:
$$\overline{\psi}(r,\theta) = r^{1/2}\sqrt{\gamma_1+\gamma_2\cos(\theta)},$$
where $|\gamma_2|\leq \gamma_1$, which we shall not analyze. When $\gamma_1>|\gamma_2|$, the solution is elliptic. There does not seem to be a canonical term for these solutions, but we note that Kaden in \cite{K} seems to be the first to study vortex-sheet solutions of the Euler equations with data having this homogeneity (on the other hand, Kaden does not appear to study this family of steady states). When $\alpha\in (3/2,2)$ the only solutions are the power-law vortex and the power-law shear flow with stream function $\overline{\psi}(r,\theta) = r^{2-\alpha}|\cos(\theta)|^{2-\alpha}$. Also, we exclude the endpoint case $\alpha=1$ from our analysis since this corresponds to a vortex sheet solution for which the distributional calculus becomes inconvenient.

 The main result of this note is now:
\begin{theorem}
\label{MAIN}
The two-dimensional incompressible Euler equations in vorticity form, linearized around any $(m\cdot l)$-fold symmetric homogeneous steady state that is not a radial power-law vortex, are ill-posed on $L^2_m(\mathbb{R}^2)$ for any $m\geq 2$, $l\geq 1$, and homogeneity parameter $\alpha\in (0,1)$.
\end{theorem}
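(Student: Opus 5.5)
We argue by contradiction, using the characterization of well-posedness recalled from \cite{EN}: the linearized Cauchy problem is well-posed on $L^2_m(\mathbb{R}^2)$ if and only if the linearized operator
$$L\omega = -\overline{v}\cdot\nabla\omega - (K_{BS}\ast\omega)\cdot\nabla\overline{\omega}$$
generates a strongly continuous semigroup. We split $L = L_1 + L_2$, where $L_1\omega = -\overline{v}\cdot\nabla\omega$ is transport by the background velocity and $L_2\omega = -v[\omega]\cdot\nabla\overline{\omega}$ is the vortex-stretching-type term.

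The plan has three steps.

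\textbf{Step 1: rule out $L_2$ as a bounded perturbation.} We first show that $L_2$ is \emph{not} a bounded (nor relatively bounded) perturbation of $L_1$. Otherwise well-posedness of $L$ would be equivalent to that of $L_1$, and the problem would reduce to the transport part.

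By the Luo--Shvydkoy classification, a non-radial steady state with $\alpha\in(0,1)$ has a stream function $\overline{\psi}=r^{2-\alpha}g(\theta)$ that vanishes on finitely many rays $\theta=\theta_j$. Near these rays $g$ has an explicit singular asymptotic expansion, obtained from the ODE $g''+(2-\alpha)^2g = F(g)$. This expansion forces $f = \overline{\omega}/r^{-\alpha}$ to blow up like a negative power of $|\theta-\theta_j|$. Consequently $\nabla\overline{\omega}$ fails to be locally integrable in a controlled way near those rays.

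\textbf{Step 2: exhibit the obstruction on the ray.} Since $m\ge2$, the Biot--Savart law $\omega\mapsto v[\omega]$ is bounded from $L^2_m$ into a space of velocities that are locally bounded, or at least H\"older with vanishing at the origin (as in \cite{ABC}, \cite{BC}). Using this, we would show that the velocity field generated by a smooth compactly supported $\omega$ is generically nonzero on a vanishing ray. Then $v[\omega]\cdot\nabla\overline{\omega}\notin L^2$.

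More precisely, we would construct a dense set of $\omega\in L^2_m$ with $L_1\omega\in L^2$ but $L_2\omega\notin L^2$. The natural choice is $\omega$ supported away from the origin and away from the rays, with a nonzero normal component of $v[\omega]$ across $\theta_j$. The normal component of $\overline{v}$ vanishes on the ray, so $\overline{v}\cdot\nabla\omega$ is harmless there, while $\partial_\theta\overline{\omega}\sim |\theta-\theta_j|^{-\beta}$ with $\beta\ge 1/2$ is not square integrable.

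If $D(L)$ is taken to be the maximal domain, we would then show it is not dense. Each $\omega\in D(L)$ must make the linear functional ``normal velocity at the ray point'' vanish, i.e. satisfy a family of constraints $\langle \omega, \nabla^\perp G(\cdot,x_0)\rangle = 0$ for $x_0$ on the rays. Since $G$ is the Green's function restricted to $m$-symmetric data, whose gradient lies in $L^2_m$ locally in the relevant annuli, these constraints define a proper closed subspace, contradicting density.

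\textbf{Step 3 (main obstacle): the exponent $\beta$ and the $m$-fold kernel.} The hardest step is verifying that the singularity exponent really satisfies $\beta\ge1/2$, so that $\partial_\theta f\notin L^2$ near the ray, uniformly for all $\alpha\in(0,1)$ and all admissible profiles. We would first derive the local asymptotics $g(\theta)\sim c|\theta-\theta_j|^{\gamma}$ from the first-order Hamiltonian reduction of the profile ODE, and read off $f=g''+(2-\alpha)^2g$.

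A second delicate point is that $\nabla G(\cdot,x_0)$, symmetrized over the $m$-fold group, must lie in $L^2$ near $x_0$. It does not, since it behaves like $1/|x-x_0|$. The plan therefore needs a fallback: instead of a pointwise constraint, we use a weighted average over a small arc of the ray. Alternatively, we argue directly that the unique-solution property fails, by producing $x_n\to0$ in $D(A)$ for which $\|L x_n\|$, and hence the solution, blows up instantly. Which of these variants closes cleanly is what I expect to be the crux.
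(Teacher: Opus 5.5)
There is a genuine gap. Your architecture matches the paper's: ill-posedness because the maximal domain of $L$ is not dense, since membership forces the normal (angular) velocity $v_\theta$ of $v=K_{BS}\ast\omega$ to vanish on each ray where $g$ vanishes. That constraint is a continuous linear condition with a proper closed kernel. The problem is that the decisive step, that \emph{every} $\omega\in L^2_m$ with $L\omega\in L^2_m$ satisfies the constraint, is only asserted.

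What Step 2 actually does is exhibit smooth $\omega$, supported away from the rays with $v_\theta\neq0$ on a ray, that lie outside $D(L)$. A dense set lying outside a subspace says nothing about whether the subspace is dense. A rough $\omega\in L^2_m$ could a priori produce a transport term $-\overline v\cdot\nabla\omega$ that is singular along the ray and cancels $r^{-1-\alpha}f'(\theta)v_\theta$. Your claim that $\overline v\cdot\nabla\omega$ is ``harmless'' because $\overline v$ is tangent to the ray holds only for $\omega$ smooth near the ray. The radial transport term $r^{1-\alpha}g'(\theta)\partial_r\omega$ is not damped by the vanishing of $g$, and for $\omega\in L^2$ it is merely a distribution. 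Ruling out this cancellation is the missing idea.

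The paper does it in Lemma~\ref{CHAR}. Pair $L\omega=h$ with $r\phi(r)$, $\phi\in C_c^\infty((0,\infty))$, and integrate in $r$. This moves the radial derivative onto $\phi$ and leaves a first-order distributional ODE in $\theta$:
\[
g'(\theta)A(\theta)-(2-\alpha)g(\theta)W'(\theta)-f'(\theta)V(\theta)+\alpha f(\theta)U(\theta)=H(\theta).
\]
Here $W,A,H\in L^2$ in $\theta$ by Cauchy--Schwarz, and $U,V$ (radial averages of $v_r,v_\theta$ against $r^{-\alpha}\phi$) are continuous at $\theta_0$. For $\alpha\in(0,1)$, $g$ has a simple zero at $\theta_0$. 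Dividing by $g$ and integrating from $x=\theta-\theta_0$ to $\epsilon$ gives $W\sim c\,V(\theta_0)\,x^{-2/(2-\alpha)}$ modulo lower-order terms (the $L^2$ terms contribute only $O(x^{-1/2})$). This contradicts $W\in L^2$ unless $V(\theta_0)=0$ for every $\phi$, i.e.\ unless the trace of $v_\theta$ on the ray vanishes.

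The points you flag as the crux are the easy ones:
\begin{itemize}
\item The exponent is immediate from the simple zero: $f'\sim|\theta-\theta_0|^{-2/(2-\alpha)}$ with $2/(2-\alpha)>1$.
\item Closedness of the constraint follows from continuity of $K_{BS}\ast:L^2_m\to X$ together with the scaled trace estimate $X\to Y$ of Lemma~\ref{TRACE}. This is the rigorous form of your ``weighted average along the ray''.
\end{itemize}
Your claim that $L^2_m$ vorticity yields locally bounded or H\"older velocity is false: one only gets $H^1_{loc}$, which is exactly why pointwise evaluation fails. Finally, your alternative of producing $x_n\to0$ with $\|Lx_n\|\to\infty$ proves nothing, since every unbounded generator admits such sequences.
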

We only require the symmetry condition on the background to ensure that, at the very least, the linearization maps $m$-fold symmetric functions into a space of $m$-fold symmetric functions. We recall that there exists homogeneous steady states with $m$-fold rotational symmetry for any $m\geq 2$, see \cite{CC} for details.
The origin of the ill-posedness for the linearization around non-radial-vortex solutions is the fact that the linearization is not densely defined as an operator on $L^2_m(\mathbb{R}^2)$: the maximal domain is not dense. In particular, the linearization around such background profiles cannot generate strongly continuous semigroups. This sort of ill-posedness should therefore also occur for the linearization in self-similar coordinates, but we do not pursue this question here. Since the ill-behavior is due to the presence of a vorticity singularity of the background on a ray from the origin, we also conjecture that all the symmetry assumptions above can be dropped and that there is ill-posedness in $L^2(\mathbb{R}^2)$ for any homogeneous steady state with the vanishing property. Note again that the homogeneous steady-states are stationary solutions without force in both physical and self-similar versions of the Euler equations.

Since one certainly needs at least the well-posedness of the linear problem to proceed along the lines of Vishik's proof, Theorem \ref{MAIN} shows that the only remaining case is the study of the $(-\alpha)$-homogeneous power-law vortex and the solutions when $\alpha\geq 1$. On the other hand, the present author and Tim Binz in \cite{BC} proved the linear stability of the power-law vortex with some mild assumptions on the homogeneity parameter and symmetry class in both physical and self-similar coordinates, and it is unclear how to overcome the fact that the background vorticity is not in $L^1_{loc}$ when $\alpha>1$, if one is to analyze the linearization in vorticity form. Thus, any attempt to use Vishik's approach in the unforced case should probably rely on more arbitrary data, like the data considered in \cite{H}. Our result here further suggests that the stationary self-similar profiles (i.e. the homogeneous self-similar solutions) are truly stationary in the sense that they cannot be the initial data for another, time-dependent solution of the Euler equations.

We thank the Simons Foundation and New York University for their support. We thank Tim Binz and Hyungjun Choi for many collaborative discussions. We disclose the following use of LLM's: we used Gemini 3.1 Pro to proofread earlier drafts of this paper. Gemini 3.1 Pro found a gap in the previous argument and suggested a path to fill it, which inspired our Lemma \ref{CHAR}. 

\section{Homogeneous Steady States} 
Let $\alpha\in (0,2)$ be a real parameter. We consider solutions of Equation \eqref{EULER} with the following self-similar ansatz:
$$ \omega(x,t) = t^{-1}\Omega(xt^{-1/\alpha}).$$
The function $\omega(x,t)$ solves the initial value problem in Equation \eqref{EULER} with the above ansatz if and only if the function $\Omega(\xi)$ satisfies the following stationary boundary-value problem
\begin{equation}\label{SEULER}\begin{gathered}
-\Omega -\frac{\xi}{\alpha}\cdot\nabla \Omega+V\cdot\nabla \Omega=0\\
V= K_{BS}\ast \Omega\\
\Omega(\xi) \sim |\xi|^{-\alpha}\cdot f\left(\frac{\xi}{|\xi|}\right) \quad \textrm{ as } |\xi|\to \infty.
\end{gathered}
\end{equation}
The initial value problem has been replaced with a boundary value problem with a boundary condition at spatial infinity, and the limit above can be interpreted as either a point-wise limit or as the following blow-down limit in some function space:
$$ \lim_{R\to \infty} R^\alpha \cdot \Omega(R\cdot \xi) = |\xi|^{-\alpha}f\left(\frac{\xi}{|\xi|}\right).$$
These are the forward self-similar solutions of Equation \eqref{EULER}. Note that the boundary value of Equation \eqref{SEULER} (therefore the initial data of Equation \eqref{EULER}) is always a $(-\alpha)$-homogeneous function on $\mathbb{R}^2$. It follows that any stationary (in physical time) self-similar solution is necessarily a $(-\alpha)$-homogeneous function. 

Let $\overline{\omega}(r,\theta)=r^{-\alpha}f(\theta)$ be a $(-\alpha)$-homogeneous steady state of Equation \eqref{EULER} corresponding to $(1-\alpha)-$homogeneous velocity field $\overline{v}(r,\theta)$ and $(2-\alpha)-$homogeneous stream function $\overline{\psi}(r,\theta)$. As observed in \cite{LS}, if a homogeneous solution of the Euler equations corresponds to the stream function $\overline{\psi}(r,\theta) = r^{2-\alpha}g(\theta)$, then $g(\theta)$ satisfies the following ordinary differential equation with Hamiltonian structure:
$$g''(\theta) = \frac{(1-\alpha)B}{2-\alpha}g(\theta)^{\tfrac{-\alpha}{2-\alpha}} - (2-\alpha)^2g(\theta),$$
for some constant $B$. We should interpret the differential equation above on every sub-interval of $\theta$ for which $g(\theta)$ does not vanish. Due to the scaling invariance of the Euler equations, we may consider only the case when $|B|=1$ (cf. \cite{LS}). The angular behavior of the vorticity is therefore
\begin{equation}\label{VORSTREAM}f= g'' + (2-\alpha)^2 g =  \frac{(1-\alpha)B}{2-\alpha}g(\theta)^{\tfrac{-\alpha}{2-\alpha}}.\end{equation}
From this we observe that 
$$\overline{\omega} = F(\overline{\psi}),$$
where $F$ is the function
$$F(x) = \frac{(1-\alpha)B}{2-\alpha} \textrm{sgn}(x)\cdot |x|^{\tfrac{-\alpha}{2-\alpha}}.$$
Let us also document what the background velocity $\overline{v}$ is in polar coordinates:
$$\overline{v}(r,\theta) = (\nabla^\perp \overline{\psi})(r,\theta) = r^{1-\alpha}\left( (2-\alpha)g(\theta) \textbf{e}_\theta -g'(\theta) \textbf{e}_r\right).$$
With the sign conventions we have made, the following relationships hold:
$$\overline{\omega} = \textrm{curl } \overline{v} = \Delta \overline{\psi}, \quad \textrm{ and } \quad \nabla^\perp \overline{\psi} = \overline{v}.$$
We also remark that the pressure corresponding to a homogeneous solution is always given by $r^{2-2\alpha}P$ for some real constant $P$. Lastly, we observe that if $\omega(r,\theta) = r^{-\alpha}f(\theta)$ as above and $f(\theta)$ is not a constant function (i.e. not the radial power law vortex), then $f$ is singular whenever $g$ has a zero. In particular, for any such homogeneous solution, singularities of the vorticity occur along rays from the origin. See \cite{CC} for further discussion on this point.

Before ending this section, we remark that when $\alpha\in(0,3/2)\cup(3/2,2)$, any homogeneous solution that is not the radial power-law vortex has stream function vanishing along some rays from the origin: this was proved in \cite{LS}. In addition, when $\alpha\in (0,1)$, $g(\theta)$ has a simple zero at any ray $\theta=\theta_0$ where $\overline{\omega}$ is singular, again proven in \cite{LS}. Otherwise, when $\alpha\in (1,3/2)\cup(3/2,2)$ we use the following conservation law (cf. Equation (21) from \cite{LS}):
\begin{equation}\label{CONS}B = (2P+(2-\alpha)^2g^2+(g')^2)g^{(2\alpha-2)/(2-\alpha)},\end{equation}
to see that $g'\sim g^{(1-\alpha)/(2-\alpha)}\to \infty$ as $\theta\to \theta_0$, which in turn implies that $g\sim (\theta-\theta_0)^{2-\alpha}$ as $\theta\to \theta_0$. However, this means that $\overline{\omega} \sim r^{-\alpha}(\theta-\theta_0)^{-\alpha}$, which is not in $L^1_{loc}(\mathbb{R}^2)$ when $\alpha>1$. Thus, since one needs to take at least a distributional derivative of $\overline{\omega}$ and multiply by an $H^1_{loc}$ function $v$ in the definition of $L_\phi$ (i.e. it includes the term $u\cdot\nabla\overline{\omega}$), it is unclear how to correctly interpret the linearization of the Euler equations around $\overline{\omega}$ in this case.

\section{Linearization}

We denote the linearization of the Euler equations around the background vorticity profile $\overline{\omega}$ by $L_\phi$, which is given by
$$L_\phi \omega : = -\overline{v}\cdot\nabla\omega - v\cdot\nabla\overline{\omega}.$$
We observe that 
$$L_\phi = L_0 + K,$$
where 
$L_0$ is the following transport operator:
$$L_0\omega = -\overline{v}\cdot\nabla \omega$$
and
$$K\omega = -(K_{BS}\ast\omega)\cdot\nabla\overline{\omega}.$$
The natural domain to consider for $L_\phi$ is 
$$D(L_\phi) = \{\omega\in L^2_m(\mathbb{R}^2) : L_\phi\omega \in L^2_m(\mathbb{R}^2)\}.$$
By our choice that the background is $(m\cdot l)$-fold symmetric for some integer $l\geq 1$, we know, at the very least, that $L_\phi \omega$ is $m$-fold symmetric when $\omega$ is.
It was proven in \cite{BC} that $L_0$ generates a strongly continuous semigroup on $L^2_m(\mathbb{R}^2)$ as long as the homogeneity parameter $\alpha\in (0,1)$.
Also note that $L_0$ is an operator that takes derivatives of $\omega$, while $K$ is a nonlocal operator that "gains" a derivative, so it is impossible that $L_0$ is relatively bounded with respect to $K$. This is a remark we make more precise in
\begin{proposition}
\label{NOTNOT}
$L_0$ is not relatively bounded with respect to $K$.
\end{proposition}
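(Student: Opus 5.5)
We argue by contradiction, assuming a bound
$$\|L_0\omega\|_{L^2}\le a\|\omega\|_{L^2}+b\|K\omega\|_{L^2}\qquad\text{for all }\omega\in D(K),$$
and we break it with a family of highly oscillating test functions supported in a fixed compact annular sector $S$ away from the origin and away from the singular rays of $\overline{\omega}$. On such a region $\overline{v}$, $\overline{\omega}$ and $\nabla\overline{\omega}$ are smooth and bounded, since $g$ is smooth and nonvanishing there. We also need $\overline{v}\neq 0$ somewhere in $S$. This holds: $|\overline{v}|^2=r^{2-2\alpha}((2-\alpha)^2g^2+(g')^2)$, and away from zeros of $g$ this is positive.

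The test family is
$$\omega_n(x)=\chi(x)\cos\bigl(n\,\overline{\psi}(x)\bigr)\quad\text{or}\quad \omega_n(x)=\chi(x)e^{in\,\xi\cdot x},$$
with $\chi$ a smooth bump in $S$ and $\xi$ chosen so that $\overline{v}\cdot\xi\neq0$ on $\operatorname{supp}\chi$. I will take the plane-wave version, symmetrized over the $m$ rotations $R_{2\pi j/m}$ so that $\omega_n\in L^2_m$. Because the rotated copies have disjoint supports (shrinking $S$ if needed), the estimates reduce to a single copy.

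The three estimates are as follows.
\begin{itemize}
\item \textbf{Size of $\omega_n$.} $\|\omega_n\|_{L^2}\sim 1$ uniformly in $n$.
\item \textbf{Transport part.} $L_0\omega_n=-\overline{v}\cdot\nabla\chi\,e^{in\xi\cdot x}-in(\overline{v}\cdot\xi)\chi e^{in\xi\cdot x}$, so $\|L_0\omega_n\|_{L^2}\gtrsim n$.
\item \textbf{Nonlocal part.} $\|K\omega_n\|_{L^2}\to0$. This is the main step. Since $K\omega_n=-(K_{BS}\ast\omega_n)\cdot\nabla\overline{\omega}$, we need a weighted $L^2$ bound for $v_n=K_{BS}\ast\omega_n$ against $|\nabla\overline{\omega}|^2$. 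Globally, $\nabla\overline{\omega}$ is singular near the rays where $g$ vanishes and near the origin. Near a simple zero of $g$, $f\sim|\theta-\theta_0|^{-\alpha/(2-\alpha)}$, so $|\nabla\overline{\omega}|\lesssim r^{-\alpha-1}|\theta-\theta_0|^{-\alpha/(2-\alpha)-1}$, which is not square-integrable. So we cannot simply say $v_n$ is small in $L^2$; we must use pointwise smallness of $v_n$.
\end{itemize}

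To get that smallness, write $\omega_n=n^{-1}\,\partial_j(\cdot)+O(n^{-1})$ terms, integrating by parts in the direction $\xi$:
$$\omega_n=\frac{1}{in|\xi|^2}\,\xi\cdot\nabla\bigl(\chi e^{in\xi\cdot x}\bigr)-\frac{1}{in|\xi|^2}(\xi\cdot\nabla\chi)e^{in\xi\cdot x}.$$
Repeating this $N$ times and using that $K_{BS}\ast\partial_j$ is a Calder\'on--Zygmund operator, we get that $v_n$ is $O(n^{-1})$ in every $L^p$, $1<p<\infty$.

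The pointwise decay needed near the origin and near the singular rays comes from the separation of supports. Away from $\operatorname{supp}\chi$, $v_n(x)=\int K_{BS}(x-y)\omega_n(y)\,dy$ has a smooth kernel in $y$. Integrating by parts $N$ times in $y$ along $\xi$ gives $|v_n(x)|\lesssim_N n^{-N}(1+|x|)^{-1}$ for $\operatorname{dist}(x,\operatorname{supp}\chi)\ge\delta$.

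Choosing $\delta$ so that the $\delta$-neighbourhood of $\operatorname{supp}\chi$ stays inside the region where $\nabla\overline{\omega}$ is bounded, we split $\|K\omega_n\|_{L^2}^2$ into a near part and a far part.
\begin{itemize}
\item \textbf{Near part.} This is $\lesssim\|v_n\|_{L^2(\text{near})}^2=O(n^{-2})$.
\item \textbf{Far part.} This is $\lesssim n^{-2N}\int|\nabla\overline{\omega}|^2(1+|x|)^{-2}\,dx$.
\end{itemize}

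The far integral is infinite, so this does not close as stated, and this is the genuine obstacle. The way out is to check first that $D(K)$ is what it should be. Since $K\omega_n\in L^2$ is required for $\omega_n$ to lie in the domain, the $m$-fold symmetry must be used. In the paper's framework (cf. \cite{BC}), $v$ for $m$-fold symmetric data vanishes appropriately at the origin. But $K\omega_n\notin L^2$ near the singular rays unless $v_n\cdot\nabla\overline{\omega}$ has a cancellation: the dominant part of $\nabla\overline{\omega}$ there is angular, $r^{-1}\partial_\theta\overline{\omega}\,\mathbf e_\theta$, so only $v_n\cdot\mathbf e_\theta$ matters.

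I would therefore first try to fix the test family so that $K\omega_n$ is compactly supported inside $S$. Concretely, take
$$\omega_n=\Delta\bigl(\chi\,\phi_n\bigr),$$
with $\phi_n$ oscillating and $m$-fold symmetrized. Then the stream function $\psi_n=\chi\phi_n$ is exactly compactly supported, so $v_n=\nabla^\perp\psi_n$ is supported in $S$ and no far-field issue arises. Choose $\phi_n=n^{-2}e^{in\xi\cdot x}$. Then $\|\omega_n\|_{L^2}\sim1$ and $\|L_0\omega_n\|_{L^2}\sim n$. Meanwhile $v_n=O(n^{-1})$ in $L^\infty$, with support in $S$ where $\nabla\overline{\omega}$ is bounded, so $\|K\omega_n\|_{L^2}=O(n^{-1})$.

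With these three estimates, the assumed inequality gives $n\lesssim a+b\,n^{-1}$, a contradiction as $n\to\infty$. This final choice of test functions removes the obstacle, and the remaining verifications are routine.
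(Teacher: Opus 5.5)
Your final construction is correct. It shares the paper's key device but produces the contradiction in a different way. The paper also writes $\omega=\Delta\psi$ with $\psi$ compactly supported away from the origin and the singular rays. Then $v=K_{BS}\ast\omega=\nabla^\perp\psi$ lives where $\nabla\overline{\omega}$ is bounded, so $K\omega\in L^2$ is automatic. The paper, however, uses a single \emph{rough} $\psi\in (C^2_c\cap H^2)\setminus H^3$ to get $L_0\omega\notin L^2$, i.e.\ it shows $D(K)\not\subset D(L_0)$. You instead use smooth oscillating stream functions $\psi_n=n^{-2}\chi e^{in\xi\cdot x}$ (symmetrized) and prove $\|\omega_n\|_{L^2}\sim 1$, $\|K\omega_n\|_{L^2}=O(n^{-1})$ and $\|L_0\omega_n\|_{L^2}\gtrsim n$. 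So the inequality fails even on functions in $D(K)\cap D(L_0)$.

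The paper's route is shorter and hits the domain-inclusion part of the definition directly. Yours is quantitative and excludes a relative bound even on a core of smooth functions. It also makes explicit a point the paper passes over: one needs the directional derivative $\overline{v}\cdot\nabla\omega$ to be large (or non-square-integrable), not merely $\nabla\omega$. Your choice of $\xi$ with $\overline{v}\cdot\xi\neq 0$ on $\textrm{supp}\,\chi$ secures exactly this.

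The first half of your write-up (bare plane waves $\chi e^{in\xi\cdot x}$, Calder\'on--Zygmund bounds, far-field decay) can simply be dropped. As you noticed, those $\omega_n$ are generally not even in $D(K)$, because their velocity has a nonvanishing angular trace on the singular rays. That is precisely the obstruction identified in Proposition \ref{NOTPL}.
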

\begin{proof}
Suppose the contrary, then there exist constants $a,b>0$ such that
\begin{equation}\label{CONTRADICT}\|L_0 \omega \|_{L^2}\leq a \|K \omega \|_{L^2} + b\|\omega\|_{L^2}\end{equation}
for all $\omega\in D(K)$. Now consider some particular function $\psi$ compactly supported away from the singularities of $\overline{u}$, $\overline{\omega}$, and $\nabla \overline{\omega}$ (which are either along some rays from the origin or simply the origin itself). Let us also choose some $m$-fold rotationally symmetric $\psi$ so that $\omega= \Delta \psi$ and $\psi \in \left(C^2_c(\mathbb{R}^2)\cap H^2(\mathbb{R}^2)\right)\setminus H^3(\mathbb{R}^2)$, so that $\psi, \omega = \Delta\psi, v=\nabla^\perp\psi = K_{BS}\ast \omega$ are all compactly supported away from the singular rays and at least continuous. Then $\omega \in D(K)$ and the right-hand-side of Equation (\ref{CONTRADICT}) is finite, but the left-hand-side (which involves an additional derivative of $\omega$) is infinite, achieving a contradiction.
\end{proof}

Before continuing, we need to define two Banach spaces. First we define the following subspace of $L^2_{loc}(\mathbb{R}^2)$:
$$X: = \{f : \|f\|_X := \sup_{R>0} \left(R^{-1}\|f\|_{L^2(B_R(0))}+\|\nabla f\|_{L^2(B_R(0))}\right)<\infty \}.$$
Let $\gamma$ be any ray from the origin of $\mathbb{R}^2$, and let $\gamma_R = B_R(0)\cap \gamma$ for any $R>0$. Next, we define the following subspace of $L^2_{loc}(\gamma)$:
$$Y: = \{f : \|f\|_Y := \sup_{R>0} \left(R^{-1/2}\|f\|_{L^2(\gamma_R)}\right)<\infty \}.$$
The spaces $X,Y$ are also convenient because they are Banach spaces rather than Fr\'{e}chet spaces.
Lemma 3.1 from \cite{BC} states that $(K_{BS}\ast): L^2_m(\mathbb{R}^2) \to X$ is a continuous linear operator between the two Banach spaces. Now we prove the following custom Trace Theorem:
\begin{lemma}
\label{TRACE}
The trace operator $T$ to the ray $\gamma$ is a continuous linear mapping $T: X\to Y$.
\end{lemma}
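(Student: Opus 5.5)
The plan is to reduce the statement to a scale-invariant trace estimate on a single ball, and then use the dilation symmetry of the norms. Concretely, I would prove that there is a constant $C>0$ such that for all $f\in H^1(B_1(0))$,
\begin{equation}\label{UNITTRACE}
\|f\|_{L^2(\gamma_1)} \leq C\left(\|f\|_{L^2(B_1(0))}+\|\nabla f\|_{L^2(B_1(0))}\right).
\end{equation}
Here $\gamma_1$ is the segment of the ray $\gamma$ from the origin to the unit circle. This is the standard trace theorem for a Lipschitz curve of codimension one inside a bounded Lipschitz domain. Since $\gamma_1$ is a straight segment through the interior of the disk, I would prove \eqref{UNITTRACE} directly, which avoids quoting a trace theorem on a domain with a slit.

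For the direct proof, rotate so that $\gamma$ is the positive $x_1$-axis and work first with smooth $f$, then extend by density of $C^\infty(\overline{B_1})$ in $H^1(B_1)$. For $x_1\in(0,1)$ and $x_2\in(0,h(x_1))$, where $h(x_1)\gtrsim$ is a fixed vertical height, I use the fundamental theorem of calculus:
$$
|f(x_1,0)|^2 \leq 2|f(x_1,x_2)|^2 + 2h\int_0^{h}|\partial_2 f(x_1,s)|^2\,ds .
\]
Then I average in $x_2$ and integrate in $x_1$. The one nuisance is near $x_1=1$, where the disk is thin above the axis. I would handle it by integrating vertically over a uniform-height region, such as the rectangle $[0,1]\times[0,\tfrac12]$ intersected with $B_1$, and covering the part near the endpoint with a slanted or downward segment. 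A simpler alternative is to apply the argument in $B_2$ and restrict. Either way this yields \eqref{UNITTRACE} with an absolute constant.

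Next comes scaling. For $R>0$ and $f\in X$, set $f_R(x)=f(Rx)$. A change of variables gives
$$\|f_R\|_{L^2(\gamma_1)}=R^{-1/2}\|f\|_{L^2(\gamma_R)},\qquad \|f_R\|_{L^2(B_1)}=R^{-1}\|f\|_{L^2(B_R)},\qquad \|\nabla f_R\|_{L^2(B_1)}=\|\nabla f\|_{L^2(B_R)}.$$
Applying \eqref{UNITTRACE} to $f_R$ therefore gives
$$R^{-1/2}\|f\|_{L^2(\gamma_R)}\leq C\left(R^{-1}\|f\|_{L^2(B_R)}+\|\nabla f\|_{L^2(B_R)}\right)\leq C\|f\|_X .$$
Taking the supremum over $R>0$ gives $\|Tf\|_Y\leq C\|f\|_X$.

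It remains to check that $T$ is well defined on $X$, which is immediate. Each $f\in X$ lies in $H^1(B_R)$ for every $R$, so the traces on the segments $\gamma_R$ are compatible under restriction and define a single function in $L^2_{loc}(\gamma)$. Linearity is clear.

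The main obstacle, mild as it is, is getting \eqref{UNITTRACE} cleanly with the ray ending exactly on the boundary circle. The scaling step is exact and routine. The essential observation is that the weights $R^{-1}$ and $R^{-1/2}$ in the definitions of $X$ and $Y$ are precisely the dilation weights, so the constant in the trace estimate does not depend on $R$.
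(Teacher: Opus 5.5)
Your argument is correct and matches the paper's proof: apply the standard trace inequality on the unit ball to $f_R(x)=f(Rx)$, note that the weights $R^{-1}$ and $R^{-1/2}$ in the $X$ and $Y$ norms are exactly the dilation factors, and take the supremum over $R$. The only difference is that you sketch a hands-on proof of the unit-scale estimate where the paper simply cites the trace theorem; the endpoint nuisance you flag is harmless, since you can use the trace theorem on the Lipschitz half-disk, or bound by the $H^1(B_2)$ norm, which the supremum over $R$ in $\|f\|_X$ absorbs at the cost of a factor $2$.
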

\begin{proof}
Let $R>0$ be arbitrary. Let $f\in X$ be arbitrary and let $f_R(x) = f(Rx)$. By the usual Trace Theorem from, for example, \cite{G}, we know that for some constant $C>0$:
$$\|f_R\|_{L^2(\gamma_1)} \leq C\left(\|f_R\|_{L^2(B_1(0))} + \|\nabla f_R\|_{L^2(B_1(0))}\right).$$
By a change of variables, we get:
$$R^{-1/2}\|f\|_{L^2(\gamma_R)} \leq C\left(R^{-1}\|f\|_{L^2(B_R(0))} + \|\nabla f\|_{L^2(B_R(0))}\right),$$
so by taking a supremum in $R>0$ we get $\|f\|_Y\leq C\|f\|_X$, proving the lemma.
\end{proof}

 The next proposition we prove shows that, in the case when $\overline{\omega}$ is not the power-law vortex, $K$ cannot be relatively bounded with respect to $L_0$, and in particular, $K$ is not densely defined. 

\begin{proposition}
\label{NOTPL} Let $m\geq 2$ and $\alpha\in (0,1)$.
Suppose $\overline{\omega}$ is a homogeneous steady state of the Euler equations that is not the radial power-law vortex. Then the operator $K$ is not densely defined on $L^2_m(\mathbb{R}^2)$. Consequently, we have that $K$ is not relatively bounded with respect to the operator $L_0$.
\end{proposition}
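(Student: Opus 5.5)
The key observation is that the domain of $K$ sits inside the kernel of a continuous linear functional (or family of functionals) on $L^2_m(\mathbb{R}^2)$ coming from the trace of the velocity on a singular ray. If that functional is nontrivial, its kernel is a proper closed subspace, so $D(K)$ cannot be dense.

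\textbf{Step 1: local behaviour of $\nabla\overline{\omega}$ near a singular ray.} Since $\overline{\omega}$ is not the power-law vortex and $\alpha\in(0,1)$, the results of \cite{LS} recalled in Section 2 give a ray $\gamma=\{\theta=\theta_0\}$ on which $g$ has a simple zero. By Equation \eqref{VORSTREAM},
$$\overline{\omega}\sim c\,r^{-\alpha}|\theta-\theta_0|^{-\alpha/(2-\alpha)}$$
near $\gamma$, with $c\neq 0$. Differentiating, the angular derivative satisfies
$$\tfrac1r\partial_\theta\overline{\omega}\sim c'\,r^{-1-\alpha}|\theta-\theta_0|^{-\beta-1},\qquad \beta=\tfrac{\alpha}{2-\alpha}\in(0,1).$$
The radial part of $\nabla\overline{\omega}$ is only $O(|\theta-\theta_0|^{-\beta})$, which is square-integrable in $\theta$ because $2\beta<2$.

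\textbf{Step 2: $D(K)$ lies in a kernel.} Take $\omega\in D(K)$ and set $v=K_{BS}\ast\omega$. Then $v\cdot\nabla\overline{\omega}\in L^2$, and near $\gamma$ this forces the normal component $v_\theta=v\cdot\mathbf{e}_\theta$ to vanish on $\gamma$. Indeed, $|\theta-\theta_0|^{-2\beta-2}$ is not integrable in $\theta$, since $2\beta+2>1$. Concretely:
\begin{itemize}
\item By Lemma 3.1 of \cite{BC} we have $v\in X$, so $v\in H^1_{loc}$ away from the origin.
\item If the trace $Tv_\theta$ on $\gamma$ were nonzero on a set of positive $r$-measure, then by continuity of traces in $\theta$ (an $H^1$ function has $L^2$-continuous restrictions to nearby rays) we would get $\int |v_\theta|^2|\theta-\theta_0|^{-2\beta-2}\,d\theta=\infty$ for those $r$.
\item The radial contribution $v_r\partial_r\overline{\omega}$ is locally $L^2$ on annuli by the $L^4_{loc}$ bound for $H^1$ functions together with the integrability of $|\theta-\theta_0|^{-4\beta}$ (this requires $\beta<1/4$). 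If that fails, I would instead use $v_r\in L^\infty_{loc}$ on average, or simply note that no cancellation between the two terms can occur because they blow up at different rates.
\end{itemize}
Hence $D(K)\subset N:=\{\omega\in L^2_m : T(K_{BS}\ast\omega)\cdot\mathbf{e}_\theta=0 \text{ on }\gamma\}$. By Lemma 3.1 of \cite{BC} and Lemma \ref{TRACE}, the map $\omega\mapsto T(K_{BS}\ast\omega)\cdot\mathbf{e}_\theta$ is continuous from $L^2_m$ into $Y$. So $N$ is closed, and it suffices to show $N\neq L^2_m$.

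\textbf{Step 3: $N$ is proper.} I need a single $\omega\in L^2_m$ whose velocity crosses $\gamma$. The natural candidate is $\omega=\Delta\psi$ with $\psi$ smooth, compactly supported, $m$-fold symmetric, and $\partial_r\psi\neq0$ somewhere on $\gamma$. Then $v_\theta=\partial_r\psi$ up to sign, so its trace is nonzero. One can take $\psi=\chi(r)\cos(m(\theta-\theta_0))$ multiplied by a radial bump, which gives $\partial_r\psi\neq 0$ on $\gamma$. Since $\overline{D(K)}\subset N\subsetneq L^2_m$, $K$ is not densely defined. For the consequence: $L_0$ generates a $C_0$-semigroup by \cite{BC}, so $D(L_0)$ is dense. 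If $K$ were $L_0$-bounded we would have $D(L_0)\subset D(K)$, making $D(K)$ dense, a contradiction.

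\textbf{Main obstacle.} The hardest part is Step 2: showing rigorously that $v\cdot\nabla\overline{\omega}\in L^2$ forces the normal trace to vanish, while handling the radial term and any possible cancellation. My first attempt would be to test against annular sectors $\{r\in(R,2R),\,|\theta-\theta_0|<\delta\}$ and use the $H^1$ estimate $\|v_\theta(\cdot,\theta)-Tv_\theta\|_{L^2(dr)}\lesssim|\theta-\theta_0|^{1/2}\|\nabla v\|_{L^2}$ to isolate the leading singular term.
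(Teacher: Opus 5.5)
This is essentially the paper's proof. From $K\omega=-r^{-1-\alpha}(f'v_\theta-\alpha f v_r)$, the faster blow-up of $f'$ relative to $f$ forces $Tv_\theta=0$ on the singular ray, so $D(K)$ lies in $\ker(T\circ(K_{BS}\ast))$, which is closed by Lemma 3.1 of \cite{BC} and Lemma \ref{TRACE}; your explicit $\psi$ showing the kernel is proper, and your derivation of the consequence from the density of $D(L_0)$, spell out steps the paper only asserts.

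The one thing to discard is the attempt to put $v_r\partial_r\overline{\omega}$ in $L^2$ separately. That fails for $\alpha\geq 2/3$, because $|\theta-\theta_0|^{-2\beta}$ is integrable only when $\beta<1/2$, not whenever $2\beta<2$. Rely solely on your different-rates comparison instead, e.g.\ $\|K\omega(\cdot,\theta)\|_{L^2(I)}\gtrsim|f'(\theta)|\,\|v_\theta(\cdot,\theta)\|_{L^2(I)}-\alpha|f(\theta)|\,\|v_r(\cdot,\theta)\|_{L^2(I)}$ on a compact $I\subset(0,\infty)$, using $|f'/f|\to\infty$ and the $L^2(I)$-continuity of $v$ in $\theta$. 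This is the paper's reasoning, and it also covers the power-law shear flow, where $g$ has a non-simple zero and $f\sim|\theta-\theta_0|^{-\alpha}$.
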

\begin{proof}
Recall that $\overline{\psi}(r,\theta)=r^{2-\alpha}g(\theta)$ and that $\overline{\omega}(r,\theta)= r^{-\alpha}f(\theta)$. Given $\omega\in L^2_m(\mathbb{R}^2)$, we let $v= K_{BS}\ast \omega$. Then
$$L_0\omega = -\overline{v}\cdot\nabla \omega = r^{1-\alpha}g'(\theta)\partial_r\omega -(2-\alpha)r^{-\alpha}g(\theta)\partial_\theta \omega$$
and
$$K\omega = -v\cdot\nabla \overline{\omega} =-r^{-1-\alpha}\left(f'(\theta)v_\theta-\alpha f(\theta) v_r\right),$$
where $v_\theta,v_r$ are the angular and radial components of $v$ respectively. We also recall from Equation \eqref{VORSTREAM} that
$$f(\theta)=\frac{(1-\alpha)B}{2-\alpha}g(\theta)^{\tfrac{-\alpha}{2-\alpha}}$$
where $|B|=1$. Now, since $\overline{\omega}$ is not the power-law vortex, the angular function $g(\theta)$ vanishes at some $\theta_0$. When, in addition, $\overline{\omega}$ is not the power-law shear flow (for which the pressure of the background flow is zero), we know that $g(\theta)$ has a "simple zero" at $\theta_0$ (i.e. $g'(\theta_0)\neq 0$ when $\alpha\in (0,1)$). Therefore, the singularity at $\theta_0$ of $f'\sim g(\theta)^{-2/(2-\alpha)}$ is at a different order than $f\sim g(\theta)^{-\alpha/(2-\alpha)}$, and the two singularities cannot cancel one another.
Thus, on any circle of radius $r>0$, we get that
$$\int_0^{2\pi} |K\omega(r,\theta)|^2 d\theta =\infty$$
is a divergent integral. In the power-law shear flow case we have $g(\theta) = |\cos(\theta)|^{2-\alpha}$ and the singularity of leading order is of the form $|\cos(\theta)|^{-1-\alpha}$: the rest of the argument above remains the same. The only possibility remaining is that 
$$\lim_{\theta\to\theta_0}\left(f'(\theta)v_\theta(r,\theta) - \alpha f(\theta)v_r(r,\theta)\right)=0,$$
 i.e. that the singularity along the ray is cancelled exactly by the velocity vector field $v$. Let us examine this scenario. First, we observe that
 \begin{equation}\label{LIM1}\lim_{\theta\to \theta_0} v_\theta(r,\theta) = \lim_{\theta\to \theta_0} \frac{\alpha f(\theta)}{f'(\theta)}v_r(r,\theta)\end{equation}
Now we see that
\begin{equation}\label{LIM2}\lim_{\theta\to\theta_0} \frac{\alpha f(\theta)}{f'(\theta)}v_r(r,\theta) = C_\alpha \frac{g(\theta_0)}{g'(\theta_0)}\lim_{\theta\to\theta_0} v_r(r,\theta),\end{equation}
in the case when the background is not the power-law shear flow (thus $g'(\theta_0)\neq 0$). In the equation above $C_\alpha$ is some constant depending on $\alpha$. Then the fraction $\tfrac{g(\theta_0)}{g'(\theta_0)}=0$. When $g(\theta)=|\cos(\theta)|^{2-\alpha}$ and we deal with the power-law shear flow, we can see that this part of the required limit still goes to zero. 

Now, if $\omega\in L^2_m(\mathbb{R}^2)$, then $v\in H^1_{loc}(\mathbb{R}^2)$ and the mapping corresponding to the Biot-Savart law is continuous from $L^2_m(\mathbb{R}^2)\to X$,  see \cite{BC}. In particular, in terms of the angle (and this is where the use of polar coordinates is helpful), we have $v(r,\theta)\in H^1([0,2\pi]_{per})\subset C^0([0,2\pi]_{per})$ for almost every $r>0$. Thus, we can combine Equation \eqref{LIM1} and Equation \eqref{LIM2} to get $\lim_{\theta\to\theta_0} v_\theta(r,\theta)=0$ for almost every $r>0$. Let $T$ be the trace operator for the ray corresponding to $\theta=\theta_0$, which we denote by $\gamma$. By our generalization of the Trace Theorem from Lemma \ref{TRACE}, we have that $T: X\to Y$ is a continuous linear operator. Above, we have shown that if $\omega\in L^2(\mathbb{R}^2)$ and $\omega\in D(K)$, then we necessarily have $Tv_\theta\in Y$ and $Tv_\theta=0$, where $v= K_{BS}\ast \omega$ and $v_\theta$ is the angular component of the velocity. Since the Biot-Savart law is a continuous operator from $L^2_m(\mathbb{R}^2)$ to $X$, and the Trace operator is continuous from $X$ to $Y$, we conclude that $D(K)$ necessarily lies in a closed and strict subspace of $L^2_m(\mathbb{R}^2)$, namely the kernel of $T\circ (K_{BS}\ast)$, which finally proves that $K$ cannot be densely defined on $L^2_m(\mathbb{R}^2)$. Consequently, we have that $K$ cannot be relatively bounded with respect to $L_0$. \end{proof}

Now we show that $L_\phi$ cannot be a densely defined operator on $L^2_m(\mathbb{R}^2)$. For this it suffices to show that if $\omega\in L^2_m(\mathbb{R}^2)$, then $L_0\omega$ cannot cancel out the non-square-integrable singularity of $K\omega$, so that for any $\omega\in D(L_\phi)$ we have $\omega \in \textrm{ker}(T\circ(K_{BS}\ast))$, a strict closed subspace and therefore not dense.

\begin{lemma}
\label{CHAR}
Let $m \geq 2$ and $\alpha \in (0,1)$. Suppose $\overline{\omega}$ is a homogeneous steady state of the Euler equations that is not the radial power-law vortex. If $\omega \in D(L_\phi) \subset L^2_m(\mathbb{R}^2)$, then the trace of the angular velocity $v_\theta$ (where $v = K_{BS}\ast\omega$) must vanish identically on any singular ray $\theta = \theta_0$. Consequently, $L_\phi$ is not densely defined on $L^2_m(\mathbb{R}^2)$.
\end{lemma}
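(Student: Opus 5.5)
The plan is to localize near a singular ray $\gamma=\{\theta=\theta_0\}$ and compare the orders of the singularities in $L_0\omega$ and in $K\omega$. We work in polar coordinates on a thin angular sector $S=\{r\in(r_1,r_2),\ |\theta-\theta_0|<\delta\}$ that avoids the origin and all other singular rays.

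The first step is to record the singular structure of the two operators there. Since $g(\theta_0)=0$ and $g'(\theta_0)\neq 0$, the coefficients of $L_0$ are bounded on $S$: $L_0\omega = r^{1-\alpha}g'\partial_r\omega-(2-\alpha)r^{-\alpha}g\,\partial_\theta\omega$ with $g\sim g'(\theta_0)(\theta-\theta_0)$. On the other hand $K\omega=-r^{-1-\alpha}(f'v_\theta-\alpha f v_r)$ with $f'\sim c\,|\theta-\theta_0|^{-2/(2-\alpha)}$ and $f\sim c'|\theta-\theta_0|^{-\alpha/(2-\alpha)}$. The exponent $2/(2-\alpha)>1/2$, so a nonzero trace of $v_\theta$ produces a non-square-integrable singularity in $K\omega$. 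The term with $f$ is square integrable in $\theta$ precisely when $2\alpha/(2-\alpha)<1$, i.e. $\alpha<2/3$. For larger $\alpha$ one should note that $f v_r$ is still of strictly lower order than $f'v_\theta$, so it cannot cancel it. We treat the shear-flow case separately in the same way, using the exponents computed in Proposition \ref{NOTPL}.

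The key step is to show that, for $\omega\in L^2$, the transport part cannot absorb the singularity. Suppose $\omega\in D(L_\phi)$, so $h:=L_\phi\omega\in L^2$. Then on $S$ we have the equation
$$\overline{v}\cdot\nabla\omega = -h + r^{-1-\alpha}f'(\theta)v_\theta+\text{(lower order)}.$$
We test this against a smooth cutoff $\chi(r)$ times a test function $\eta(\theta)$ concentrating near $\theta_0$, and integrate by parts. The field $\overline v$ is divergence free and bounded on $S$, so the left side is controlled by $\|\omega\|_{L^2}\|\nabla(\chi\eta)\|_{L^2}$. The alternative would be to solve along characteristics of $\overline v$. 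These cross $\gamma$ transversally, since the radial component $-r^{1-\alpha}g'(\theta_0)\neq0$. Along them, $\omega$ would have to accumulate $\int r^{-1-\alpha}f' v_\theta\,ds$. Because a characteristic crosses the ray transversally, the arc-length parameter is comparable to the radial variable while $\theta-\theta_0$ varies linearly in $s$. The integral of $|\theta-\theta_0|^{-2/(2-\alpha)}$ diverges, since $2/(2-\alpha)>1$. Hence, if $Tv_\theta\neq0$ on a set of positive measure of $r$, then $\omega$ blows up non-integrably (like $|\theta-\theta_0|^{1-2/(2-\alpha)}$, whose square is non-integrable when $2/(2-\alpha)\ge 3/2$). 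This would contradict $\omega\in L^2$.

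For the remaining range of $\alpha$, where this power is square integrable, I expect the real obstacle. I would then rely on the weak formulation: choose $\eta$ approximating $\mathrm{sgn}(\theta-\theta_0)\,|\theta-\theta_0|^{\beta}$ appropriately, so that pairing with $r^{-1-\alpha}f'v_\theta$ diverges while $\|\nabla(\chi\eta)\|_{L^2}$ and $\|\eta\|_{L^2}$ stay bounded. Making this exponent bookkeeping work for all $\alpha\in(0,1)$ is the delicate point. We use continuity of $v$ in $\theta$ for a.e. $r$, as in Proposition \ref{NOTPL}, to replace $v_\theta$ by its trace $Tv_\theta(r)$ near $\gamma$.

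Concluding, we get $Tv_\theta=0$ on $\gamma$, i.e. $D(L_\phi)\subset\ker(T\circ(K_{BS}\ast))$. This kernel is closed by Lemma \ref{TRACE} and continuity of $K_{BS}\ast:L^2_m\to X$. It is proper: take a smooth $m$-fold symmetric $\omega$ supported in an annulus whose velocity has nonzero angular component on $\gamma$, for example a radial bump in the $U_0$ mode, which gives a purely azimuthal $v$ with $v_\theta\neq0$. Hence $D(L_\phi)$ is not dense.
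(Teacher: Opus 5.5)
Your proposal has a genuine gap in its central step, which is showing that $L_0\omega$ cannot absorb the singularity of $K\omega$.

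The characteristic argument rests on a false geometric premise. On the singular ray $g(\theta_0)=0$, so there $\overline{v}=r^{1-\alpha}\bigl((2-\alpha)g\,\mathbf{e}_\theta-g'\,\mathbf{e}_r\bigr)=-r^{1-\alpha}g'(\theta_0)\,\mathbf{e}_r$. A radial vector at a point of a radial ray is \emph{tangent} to that ray, not transverse to it. In fact $\gamma\subset\{\overline{\psi}=0\}$ is itself a streamline. Nearby characteristics are the curves $r^{2-\alpha}g(\theta)=c$, i.e.\ $\theta-\theta_0\approx c\,r^{\alpha-2}/g'(\theta_0)$, and they never cross $\gamma$. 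So $\theta-\theta_0$ does not vary linearly in arc length, and both the rate $|\theta-\theta_0|^{1-2/(2-\alpha)}$ and the threshold $\alpha\ge 2/3$ are unfounded. (With the correct geometry, the forcing $r^{-1-\alpha}f'v_\theta$ has size $|\theta-\theta_0|^{-2/(2-\alpha)}$ along a whole nearby streamline. So $\omega$ would vary by that amount, which is not square integrable for any $\alpha\in(0,1)$; the difficulty at $\alpha<2/3$ is an artifact.)

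For $\alpha<2/3$ you give no argument, and the weak estimate you propose cannot supply one. Bounding the transport pairing by $\|\omega\|_{L^2}\|\nabla(\chi\eta)\|_{L^2}$ discards exactly the relevant structure. Test functions must be supported away from $\gamma$, where $K\omega$ fails to be locally integrable. If $\eta$ vanishes near $\theta_0$ and $\|\eta'\|_{L^2}\le M$, then $|\eta|\le M|\theta-\theta_0|^{1/2}$, so $\int|f'||\eta|\,d\theta\lesssim M\int_0^\epsilon x^{1/2-2/(2-\alpha)}\,dx<\infty$ whenever $\alpha<2/3$. (Your odd choice $\mathrm{sgn}(\theta-\theta_0)|\theta-\theta_0|^\beta$ would also cancel the leading term, since $f'$ has the same sign on both sides of $\theta_0$.)

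The missing idea is that the angular component $(2-\alpha)r^{1-\alpha}g(\theta)$ of $\overline{v}$ vanishes linearly at $\theta_0$. Since $\overline{v}\cdot\nabla(\chi\eta)=-r^{1-\alpha}g'\chi'\eta+(2-\alpha)r^{-\alpha}g\,\chi\eta'$, the transport pairing is controlled by $\|\omega\|_{L^2}\bigl(\|\chi'\eta\|_{L^2}+\|g\chi\eta'\|_{L^2}\bigr)$. Take a one-sided cutoff $\eta_\delta$ rising from $0$ to $1$ on $(\theta_0+\delta,\theta_0+2\delta)$; then $\|g\eta_\delta'\|_{L^2}=O(\delta^{1/2})$. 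Set $V(\theta):=\int_0^\infty v_\theta(r,\theta)\,r^{-\alpha}\chi(r)\,dr$, with $\chi$ chosen so that $V(\theta_0)\neq0$. Then $\bigl|\int f'V\eta_\delta\,d\theta\bigr|\gtrsim|V(\theta_0)|\,\delta^{1-2/(2-\alpha)}\to\infty$, while the $f v_r$ term stays bounded because $f$ is integrable. This works for every $\alpha\in(0,1)$.

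This is a weak form of the paper's proof, which tests only in $r$ against $r\phi(r)$. There the radial transport becomes the harmless $L^2$ term $g'A$ and the angular transport becomes $-(2-\alpha)gW'$. Dividing by $g\sim\theta-\theta_0$ forces $W\sim(\theta-\theta_0)^{-2/(2-\alpha)}\notin L^2$, contradicting $W\in L^2$.

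Your concluding step is fine: closedness of $\ker(T\circ(K_{BS}\ast))$ follows from Lemma \ref{TRACE}, and properness from a radial bump with $v_\theta\neq0$ on $\gamma$.
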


\begin{proof}
Assume to the contrary that $D(L_\phi)$ is dense in $L^2_m(\mathbb{R}^2)$, so there necessarily exists $\omega \in D(L_\phi)$ such that the angular component of $v = K_{BS} \ast \omega$ has a non-vanishing trace on the ray $\theta = \theta_0$. By definition, $L_\phi \omega = L_0 \omega + K \omega = h \in L^2_m(\mathbb{R}^2)$. Expanding the operators in polar coordinates, we have:
\begin{equation} \label{PDE_Euler}
r^{1-\alpha}g'(\theta)\partial_r\omega -(2-\alpha)r^{-\alpha}g(\theta)\partial_\theta \omega - r^{-1-\alpha}\left(f'(\theta)v_\theta-\alpha f(\theta) v_r\right) = h(r, \theta).
\end{equation}
Because $v \in X \subset H^1_{loc}(\mathbb{R}^2)$, the trace of the velocity $v(r, \theta)$ on rays is a continuous function of $\theta$ with values in $L^2_{loc}(0, \infty; rdr)$. For $\theta$ near $\theta_0$, we define the following functions:
\begin{align*}
W(\theta) &:= \int_0^\infty \omega(r, \theta) r^{1-\alpha} \phi(r) dr, \\
A(\theta) &:= -\int_0^\infty \omega(r, \theta) \partial_r\left(r^{2-\alpha} \phi(r)\right) dr, \\
H(\theta) &:= \int_0^\infty h(r, \theta) r \phi(r) dr, \\
U(\theta) &:= \int_0^\infty v_r(r, \theta) r^{-\alpha} \phi(r) dr, \\
V(\theta) &:= \int_0^\infty v_\theta(r, \theta) r^{-\alpha} \phi(r) dr,
\end{align*}
where we have chosen $\phi$ to be smooth, compactly supported away from the origin, and satisfying $0\leq \phi\leq 1$. Moreover, since $\omega, h \in L^2(\mathbb{R}^2; rdrd\theta)$, we have by Cauchy-Schwarz that $W, A, H \in L^2([0, 2\pi])$. In addition, $V(\theta)$ and $U(\theta)$ are continuous at $\theta = \theta_0$ since $v(r,\theta)$ is continuous in $\theta$. Lastly, since $T v_\theta(\theta_0) \not\equiv 0$, we can choose $\phi \in C^\infty_c((0, \infty))$ appropriately such that
$$ V(\theta_0) := \int_0^\infty v_\theta(r, \theta_0) r^{-\alpha} \phi(r) dr \neq 0. $$

Now multiply Equation \eqref{PDE_Euler} by $r \phi(r)$ and integrate over $r \in (0, \infty)$. If we integrate by parts in the radial derivative, we get a one-dimensional distributional differential equation in $\theta$:
$$ g'(\theta) A(\theta) - (2-\alpha)g(\theta) W'(\theta) - f'(\theta) V(\theta) + \alpha f(\theta) U(\theta) = H(\theta). $$
Recall that $\theta=\theta_0$ is an isolated zero for $g(\theta)$. We can now solve for the distributional derivative $W'(\theta)$:
\begin{equation} \label{W_ODE}
W'(\theta) = \frac{g'(\theta)}{(2-\alpha)g(\theta)} A(\theta) - \frac{f'(\theta)}{(2-\alpha)g(\theta)} V(\theta) + \frac{\alpha f(\theta)}{(2-\alpha)g(\theta)} U(\theta) - \frac{1}{(2-\alpha)g(\theta)} H(\theta).
\end{equation}
We now analyze the order of the singularity as $\theta \to \theta_0$ for each term above. We may consider the case when the background is not the power-law shear flow (since the other case is straightforward), in which case we have $g(\theta) \sim (\theta - \theta_0)$. By Equation \eqref{VORSTREAM}, $f(\theta) \sim g(\theta)^{-\frac{\alpha}{2-\alpha}} \sim (\theta - \theta_0)^{\tfrac{-\alpha }{2-\alpha}}$. Consequently, $f'(\theta) \sim (\theta - \theta_0)^{\tfrac{-\alpha }{2-\alpha}-1}$. Therefore, asympotically near $\theta=\theta_0$, we have:
$$ (2-\alpha)W'(\theta) = (\theta-\theta_0)^{-1}A(\theta) - (\theta-\theta_0)^{\tfrac{\alpha-4}{2-\alpha}} V(\theta_0) +\alpha(\theta-\theta_0)^{\tfrac{-2}{2-\alpha}} U(\theta_0) -(\theta-\theta_0)^{-1}H(\theta).$$
We have constructed $\phi$ so that $V(\theta_0)\neq 0$, so 
$$|(\theta-\theta_0)^{\tfrac{\alpha-4}{2-\alpha}} V(\theta_0)| \gg |\alpha(\theta-\theta_0)^{\tfrac{-2}{2-\alpha}} U(\theta_0)|.$$
Therefore, we asymptotically have:
$$ (2-\alpha)(\theta-\theta_0)W'(\theta) = A(\theta) - (\theta-\theta_0)^{\tfrac{-2}{2-\alpha}} V(\theta_0)  -H(\theta).$$
We consider the distributional equation above on the sufficiently small interval $x\in(0,\epsilon)$, where $x= \theta-\theta_0$ is a simple change of variables. The question now becomes equivalent to whether it is possible to have:
$$xB'(x) = I(x) - V(\theta_0)x^{\tfrac{-2}{2-\alpha}},$$
where $B(x) = (2-\alpha)^{-1}W(x+\theta_0)$ and $I(x)\in L^2((0,\epsilon))$ is given by $I(x) = A(x+\theta_0)-H(x+\theta_0)$. If we divide the differential equation by $x$ and integrate from $x$ to $\epsilon$, we have:
$$B(\epsilon)-B(x) = \int_x^\epsilon \frac{I(s)}{s} ds  +V(\theta_0)\frac{2-\alpha}{2}\left(\epsilon^{\tfrac{-2}{2-\alpha}}-x^{\tfrac{-2}{2-\alpha}}\right).$$
Now, we know
$$\left| \int_x^\epsilon \frac{I(s)}{s} ds \right| \leq \|I\|_{L^2}\sqrt{x^{-1}-\epsilon^{-1}}.$$
Therefore, as $x\to 0$, we have found that $B(x) \sim x^{-2/(2-\alpha)}$ which, since $-2/(2-\alpha)<-1$ always, shows that $B\not\in L^2(dx)$, a contradiction.
\end{proof}

\section*{Statements and Declarations}
We declare that there is no conflict of interest.

\end{document}